    \definecolor{plum}  {rgb}{.4,0,.4}
    \definecolor{BrickRed} {rgb}{0.6,0,0}
	\definecolor{DarkBlue} {rgb}{0,0,0.6}
\def\ddefloop#1{\ifx\ddefloop#1\else\ddef{#1}\expandafter\ddefloop\fi}
\def\ddef#1{\expandafter\def\csname b#1\endcsname{\ensuremath{\boldsymbol{#1}}}}
\def\ddef#1{\expandafter\def\csname c#1\endcsname{\ensuremath{\mathcal{#1}}}}
\def\ddef#1{\expandafter\def\csname s#1\endcsname{\ensuremath{\mathsf{#1}}}}
\def\Reals{{\mathbb R}}
\def\Ex{{\mathbf E}} 
\def\trn{{\mathsf T}} 
\def\deq{:=}
\def\E{\Ex}
\newsavebox{\@brx}
\newcommand{\llangle}[1][]{\savebox{\@brx}{\(\m@th{#1\langle}\)}%
  \mathopen{\copy\@brx\kern-0.5\wd\@brx\usebox{\@brx}}}
\newcommand{\rrangle}[1][]{\savebox{\@brx}{\(\m@th{#1\rangle}\)}%
  \mathclose{\copy\@brx\kern-0.5\wd\@brx\usebox{\@brx}}}
\newtheorem{theorem}{Theorem}
\newtheorem{proposition}{Proposition}
\begin{document}

\title{A Variational Approach to Sampling in Diffusion Processes}

\author{Maxim Raginsky\thanks{University of Illinois, Urbana, IL 61801, USA}\\ \href{mailto:maxim@illinois.edu}{maxim@illinois.edu}}
\date{}
\maketitle

\centerline{\small\textit{Dedicated to the memory of Sanjoy K.~Mitter (1933-2023)}}

\smallskip

\begin{abstract} We revisit the work of Mitter and Newton on an information-theoretic interpretation of Bayes' formula through the Gibbs variational principle. This formulation allowed them to pose nonlinear estimation for diffusion processes as a problem in stochastic optimal control, so that the posterior density of the signal given the observation path could be sampled by adding a drift to the signal process. We show that this control-theoretic approach to sampling provides a common mechanism underlying several distinct problems involving diffusion processes, specifically importance sampling using Feynman--Kac averages, time reversal, and Schr\"odinger bridges. \end{abstract}


\section{Introduction}
\label{sec:intro}

In a remarkable paper \cite{Mitter_Newton}, Mitter and Newton showed that the Kallianpur--Striebel formula, a core ingredient in the theory of nonlinear filtering for diffusion processes, can be derived from the Gibbs variational principle pertaining to the minimization of a certain free energy functional on the space of probability measures over paths. Among other things, this variational formulation provided an information-theoretic explanation of the fact that the PDE for the logarithm of the filtering density, i.e., the posterior density of the signal process given the observation path, has the form of the Hamilton--Jacobi--Bellman equation for the value function of a particular stochastic control problem, a coincidence that had been noted earlier in several works \cite{Mitter_QM,Mitter_1981,Fleming_1982}. Moreover, using this control-theoretic interpretation together with Girsanov theory, Mitter and Newton have shown that one can obtain exact samples from the filtering density by the addition of a drift term to the signal process, where the drift is of the state feedback form and is equal to the negative gradient of the value function. 

In this article, we show that a variational formulation based on free energy minimization underlies a broad circle of questions pertaining to diffusion processes which include, in addition to path estimation, such problems as time reversal \cite{Anderson_1982,Haussmann_Pardoux,Follmer_1988}, the Schr\"odinger bridge problem \cite{DaiPra,Follmer_1988}, and importance sampling via Feynman--Kac averages \cite{Ezawa_Klauder_Shepp}. In fact, this variational interpretation was implicit in some of the existing treatments of these problems; our aim here is to provide a unifying perspective and to draw attention to the fact that the particular constructions that emerge in the solutions of these problems can all be viewed as instances of stochastic optimal control of diffusion processes, in the spirit of the original work of Mitter and Newton.

The remainder of the paper is structured as follows. In \Cref{sec:Gibbs} we review the Gibbs variational principle in a general setting and discuss the problem of generating samples from Gibbs measures. Diffusion processes are introduced in \Cref{sec:setup}, followed by the formulation of sampling as a problem in stochastic optimal control in \Cref{sec:control}. Next, in \Cref{sec:consequences} we revisit several problems involving sampling in diffusion processes and illustrate that they can be viewed as instances of free energy minimization. Since we are building on the ideas of \cite{Mitter_Newton}, we adopt a great deal of their notation and set-up.

\section{The Gibbs variational principle}
\label{sec:Gibbs}

Let $(\sX,\cX)$ be a standard Borel space. Let $\cP(\sX)$ and $\cH(\sX)$ be the space of all probability measures on $(\sX,\cX)$ and the space of all measurable functions $H : \sX \to (-\infty,+\infty]$, respectively. Let $P,\tilde{P} \in \cP(\sX)$ and $H \in \cH(\sX)$ be given. Then we define the following quantities:
\begin{align}
	\begin{split}\label{eq:KL}
	D(\tilde{P}\|P) &\deq \int_\sX \log \left(\frac{\dif \tilde{P}}{\dif P}\right) \dif \tilde{P} \text{ if } \tilde{P} \ll P, \\
	& \qquad +\infty \text{ otherwise},
	\end{split}\\
	\begin{split}\label{eq:EqFF}
	i(H) &\deq - \log \left(\int_\sX \exp(-H) \dif P \right) \text{ if } 0 < \int_\cX \exp(-H) \dif P < \infty, \\
	& \qquad -\infty \text{ otherwise},
	\end{split}\\
	\begin{split}\label{eq:energy}
	\langle H, \tilde{P} \rangle &\deq \int_\sX H \dif\tilde{P} \text{ if the integral is finite}, \\
	& \qquad +\infty \text{ otherwise}.
	\end{split}
\end{align}
The quantity in \cref{eq:KL} is the relative entropy of $P$ with respect to $\tilde{P}$. In the context of statistical physics, $\sX$ acquires the interpretation of the state (or configuration) space of some physical system, $P$ is some base (or reference) probability measure on the state space, and $H$ is the energy (or Hamiltonian) function. Under this interpretation, the quantity $i(H)$ defined in \cref{eq:EqFF} is the equilibrium free energy (at unit temperature), while the quantity $\langle H, \tilde{P} \rangle$ is the average energy under an alternative probability measure $\tilde{P}$. The \textit{Gibbs variational principle} \cite{Dupuis_Ellis,Mitter_Newton} states that, under some regularity conditions, $i(H)$ is the minimum value of the \textit{free energy}
\begin{align}\label{eq:free_energy}
	F(\tilde{P}) \deq \langle H, \tilde{P} \rangle + D(\tilde{P} \| P)
\end{align}
among all $\tilde{P}$, and characterizes the unique minimizer of $F(\cdot)$ that attains $i(H)$:

\begin{proposition}\label{prop:Gibbs} Let $P \in \cP(\sX)$ and $H \in \cH(\sX)$ be such that
	\begin{align*}
		-\int_\sX H \exp(-H) \dif P < \infty,
	\end{align*}
	with the convention $+\infty \cdot \exp(-\infty) = 0$. Then the probability measure 
	 $P^* \in \cP(\sX)$ defined by
\begin{align*}
	\frac{\dif P^*}{\dif P} = \frac{\exp(-H)}{\int_\sX \exp(-H) \dif P}
\end{align*}
is the unique minimizer of \cref{eq:free_energy}, and
\begin{align*}
	i(H) = F(P^*) = \min_{\tilde{P} \in \cP(\sX)} F(\tilde{P}).
\end{align*}
\end{proposition}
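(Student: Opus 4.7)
The plan is to establish the identity $F(\tilde P) = D(\tilde P \| P^*) + i(H)$ valid for every $\tilde P \in \cP(\sX)$, and then to conclude immediately from the non-negativity of relative entropy together with its strict positivity off the diagonal. This reduces the variational statement to a one-line calculation once both sides are known to be well-defined.

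First, I would verify that $P^*$ is an honest probability measure and that the identity makes sense on $P^*$ itself. The hypothesis $-\int H e^{-H}\, \dif P < \infty$ combined with the bound $x e^{-x} \le 1/e$ for $x \ge 0$ shows that $H$ is $P^*$-integrable, so $\langle H, P^* \rangle$ is finite. Normalizing gives $\dif P^* / \dif P = e^{-H}/Z$ with $Z = e^{-i(H)}\in (0,\infty)$ (this is implicit in requiring $P^*$ to be a probability measure). A direct computation yields
\begin{align*}
	D(P^*\|P) = \int_\sX \log\left(\frac{e^{-H}}{Z}\right)\dif P^* = -\langle H, P^* \rangle + i(H),
\end{align*}
so $F(P^*) = \langle H, P^* \rangle + D(P^*\|P) = i(H)$, confirming that the proposed minimum value is attained.

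Next, I would prove the general identity. Let $\tilde P \in \cP(\sX)$ be arbitrary. If $\tilde P \ll P^*$ (equivalently, $\tilde P \ll P$ and $\tilde P\{H = +\infty\} = 0$), then the chain rule for Radon--Nikodym derivatives gives
\begin{align*}
	\log\frac{\dif \tilde P}{\dif P^*} = \log\frac{\dif \tilde P}{\dif P} + H + \log Z.
\end{align*}
Integrating against $\tilde P$ produces
\begin{align*}
	D(\tilde P \| P^*) = D(\tilde P \| P) + \langle H, \tilde P \rangle - i(H),
\end{align*}
which rearranges to $F(\tilde P) = D(\tilde P \| P^*) + i(H)$, exactly as desired.

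The remaining task is to treat the degenerate cases. If $\tilde P \not\ll P$ then $D(\tilde P \| P) = +\infty$, and since $H > -\infty$ the quantity $\langle H, \tilde P \rangle$ is bounded below, so $F(\tilde P) = +\infty$; similarly, if $\tilde P\{H = +\infty\} > 0$ then $\langle H, \tilde P \rangle = +\infty$ and again $F(\tilde P) = +\infty$. In either situation the inequality $F(\tilde P) \ge i(H)$ holds trivially and is strict (since $i(H)$ is finite). Combining with the identity from the previous step, $F(\tilde P) \ge i(H)$ for all $\tilde P$, with equality if and only if $D(\tilde P \| P^*) = 0$, i.e.\ $\tilde P = P^*$. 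I do not anticipate a substantive obstacle here; the only delicate point is the careful bookkeeping of the $\pm \infty$ conventions in \cref{eq:KL,eq:EqFF,eq:energy} so that the identity $F = D(\cdot\|P^*) + i(H)$ is valid verbatim whenever both sides are finite, and both sides are $+\infty$ simultaneously otherwise.
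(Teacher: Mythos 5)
The paper states this proposition as a known result, citing \cite{Dupuis_Ellis,Mitter_Newton}, and does not supply a proof of its own, so there is no in-paper argument to compare against. Your proof is correct and is the standard one, resting on the identity $F(\tilde P) = D(\tilde P\|P^*) + i(H)$ together with non-negativity (and strict positivity off the diagonal) of relative entropy. Two minor points of bookkeeping: the inequality $xe^{-x}\le 1/e$ holds for \emph{all} real $x$, not only $x\ge 0$, which is what you actually need since $H$ may take negative values; and in the degenerate case $\tilde P\not\ll P$, the assertion that $\langle H,\tilde P\rangle$ is ``bounded below'' does not follow from the pointwise condition $H>-\infty$ (the integral $\int H\,\dif\tilde P$ could still diverge to $-\infty$). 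What actually rules out $\langle H,\tilde P\rangle=-\infty$, and hence forces $F(\tilde P)=+\infty$, is the convention in \cref{eq:energy} that assigns $\langle H,\tilde P\rangle=+\infty$ whenever the integral fails to be finite. With those small clarifications the argument is complete.
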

In many cases, $(\sX,\cX)$ has the product structure $(\sX_0 \times \bar{\sX}, \cX_0 \otimes \bar{\cX})$, and we are interested in minimizing the free energy $F(\cdot)$ subject to constraints on the marginal probability law of $X_0$, where $X$ splits into $(X_0,\bar{X})$. That is, we disintegrate the reference measure $P$ as $P(\cdot) = \int_{\sX_0} \mu(\dif x_0) P^{x_0}(\cdot)$, where $\mu$ is the marginal law of $X_0$ under $P$ and $P^{x_0}$ is the regular conditional probability law of $X$ given $X_0 = x_0$, and then minimize the free energy $F(\tilde{P})$ over all $\tilde{P} \in \cP(\sX_0 \times \bar{\sX})$, possibly subject to a constraint of the form $\tilde{\mu} \in \cC$, where $\tilde{\mu}$ is the marginal law of $X_0$ under $\tilde{P}$ and $\cC$ is some subset of $\cP(\sX_0)$. The main idea is to use the chain rule for the relative entropy, which says
\begin{align*}
	D(\tilde{P}\|P) = D(\tilde{\mu} \| \mu) + \int_{\sX_0} \tilde{\mu}(\dif x_0) D(\tilde{P}^{x_0} \| P^{x_0}),
\end{align*}
and Fubini's theorem to decompose the free energy as
\begin{align}
	F(\tilde{P}) &= \langle H, \tilde{P} \rangle + D( \tilde{P} \| P) \nonumber\\
	&= D( \tilde{\mu} \| \mu ) + \int_{\sX_0} \tilde{\mu}(\dif x_0) \left( \int_{\bar{\sX}} \tilde{P}^{x_0}(\dif \bar{x})  H(x_0,\bar{x}) + D(\tilde{P}^{x_0} \| P^{x_0}) \right)  \nonumber\\
	&= D( \tilde{\mu} \| \mu) + \int_{\sX_0} \tilde{\mu}(\dif x_0) F(x_0, \tilde{P}^{x_0}), \label{eq:free_energy_decomposition}
\end{align}
where we have denoted by $F(x_0,\tilde{P}^{x_0})$ the free energy of $\tilde{P}^{x_0}$ with respect to the energy function $H(x_0,\cdot)$ and the reference measure $P^{x_0}$. Then, applying \Cref{prop:Gibbs} conditionally on $x_0$, we have
\begin{align*}
	F(\tilde{P}) &\ge \langle v, \tilde{\mu} \rangle + D(\tilde{\mu} \| \mu),
\end{align*}
where 
\begin{align*}
	v(x_0) \deq - \log \int_{\bar{\sX}} e^{-H(x_0,\cdot)} \dif P^{x_0}
\end{align*}
is the minimum value of $F(x_0,\cdot)$ achieved uniquely by $\dif P^{*,x_0} \propto \exp{-H(x_0,\cdot)}\dif P^{x_0}$. We conclude that 
\begin{align*}
	\min_{\tilde{P} \in \cP(\sX):\, \tilde{\mu} \in \cC} F(\tilde{P}) = \min_{\tilde{\mu} \in \cC} \left( \langle v, \tilde{\mu} \rangle + D(\tilde{\mu} \| \mu) \right).
\end{align*}
In particular, if $\cC = \{\mu\}$, then the minimum is attained uniquely by the Gibbs mixture
\begin{align*}
	P^{*,\mu}(\cdot) \deq \int_{\sX_0} \mu(\dif x_0) P^{*,x_0}(\cdot).
\end{align*}
If, on the other hand, $\cC = \cP(\sX_0)$ (i.e., the marginal $\tilde{\mu}$ is completely unconstrained), then we apply the Gibbs variational principle again to get
\begin{align*}
	\min_{\tilde{P} \in \cP(\sX)} F(\tilde{P}) &= \min_{\tilde{\mu} \in \cP(\sX_0)} \left( \langle v, \tilde{\mu} \rangle + D(\tilde{\mu} \| \mu) \right) \\
	&= - \log \int_{\sX_0} e^{-v(x_0)}\mu(\dif x_0) \\
	&= - \log \int_{\sX_0 \times \bar{\sX}} e^{-H(x_0,\bar{x})} P^{x_0}(\dif \bar{x})\mu(\dif x_0) \\
	&= - \log \int_\sX e^{-H(x)} P(\dif x),
\end{align*}
where the minimum is achieved uniquely by the mixture $P^{*,\mu_*}$ with
\begin{align*}
	\frac{\dif \mu_*}{\dif \mu} &= \frac{\exp(-v)}{\int_{\sX_0} \exp(-v)\dif\mu}.
\end{align*}
The problem of interest is how to generate samples from $P^*$ (or from $P^{*,\mu_*}$) when we have the means to generate samples from the reference measure $P$. In general, this runs into issues of computational tractability. However, in some specific instances it may be possible to exploit additional structure of the problem to deduce the existence of a measurable mapping $\Phi : \sX \to \sX$, such that $P^* = P \circ \Phi^{-1}$---in other words, we first obtain a sample $X$ from $P$ and then transform it into a sample $\Phi(X)$ from $P^*$. As we shall see next, this is indeed possible when $P$ is a sufficiently regular probability law of a diffusion process.

\section{The problem set-up}
\label{sec:setup}

We now particularize the setting of \Cref{sec:Gibbs} to the case when $(\sX,\cX)$ is the space $(C([0,T]; \Reals^n),\cB_T)$ of continuous paths $x \colon [0,T] \to \Reals^n$, where $\cB_T$ is the Borel $\sigma$-algebra induced by the uniform norm topology. The reference measure $P$ is the probability law of the diffusion process governed by the It\^o integral equation
\begin{align}
	\begin{split}\label{eq:ref_process}
	X_t &= X_0 + \int^t_0 b(X_s,s) \dif s + \int^t_0 \sigma(X_s,s) \dif W_s, \qquad 0 \le t \le T \\
	X_0 &\sim \mu,
	\end{split}
\end{align}
where $X_t$ takes values in $\Reals^n$ and $W_t$ takes values in $\Reals^m$. Here, the Borel probability law $\mu$ and the mappings $b(\cdot,\cdot)$ and $\sigma(\cdot,\cdot)$ are assumed to satisfy enough regularity conditions for \cref{eq:ref_process} to have a unique strong solution. In the latter case, we will have a filtered probability space $(\Omega,\cF,(\cF_t),{\mathbb P})$ that carries an $\Reals^n$-valued random variable $X_0$ and an $m$-dimensional standard Brownian motion process $W$ independent of $X_0$, as well as a measurable map $\Phi \colon \Reals^n \times C([0,T];\Reals^m) \to \sX$, such that $(X_t = \Phi_t(X_0,W) ; 0 \le t \le T)$ is an $(\cF_t)$-adapted semimartingale satisfying \cref{eq:ref_process}. The following conditions, imposed in \cite{Mitter_Newton}, suffice for our purposes as well:
\begin{itemize}
	\item[(R1)] there exists a constant $c > 0$, such that
	\begin{align*}
		\int_{\Reals^n} \exp(c|z|^2)\mu(\dif z) < \infty;
	\end{align*}
	\item[(R2)] there exists a constant $K > 0$, such that $b$ and $\sigma$ satisfy
	\begin{align*}
		& |b(x,t)-b(\bar{x},t)| + |\sigma(x,t)-\sigma(\bar{x},t)| \le K|x - \bar{x}|,  \\
		& |b(x,t)| \le K(1+|x|), \\
		& |\sigma(x,t)| \le K
	\end{align*}
for all $x,\bar{x} \in \Reals^n$ and all $0 \le t \le T$, where we use $|\cdot|$ to denote the Euclidean norm for vectors and the Hilbert--Schmidt norm for matrices;
\end{itemize}
Next, we assume that the Hamiltonian function $H \colon \sX \to \Reals$ is of the form
\begin{align}\label{eq:Hamiltonian}
	H(X) = \int^T_0 f(X_t,t)\dif t + g(X_T),
\end{align}
and the functions $f(\cdot,\cdot)$ and $g(\cdot)$ satisfy the following:
\begin{itemize}
	\item[(H1)] $f$ and $g$ are bounded from below, continuously differentiable, and there exists a constant $C > 0$, such that
	\begin{align*}
		|f(0,t)| &\le C, \\
		\sum^n_{i=1} \Big|\frac{\partial}{\partial x_i}f(0,t)\Big| &\le C
	\end{align*}
	for all $0 \le t \le T$;
	\item[(H2)] the derivatives of $f$ and $g$ are Lipschitz continuous: there exists a constant $M > 0$, such that
	\begin{align*}
		\sum^n_{i=1} \Big|\frac{\partial}{\partial x_i}f(x,t) - \frac{\partial}{\partial x_i}f(\bar{x},t)\Big| & \le M |x - \bar{x}|, \\
		\sum^n_{i=1} \Big|\frac{\partial}{\partial x_i}g(x) - \frac{\partial}{\partial x_i}g(\bar{x})\Big| & \le M |x - \bar{x}|
 	\end{align*}
	for all $x,\bar{x} \in \Reals^n$ and all $0 \le t \le T$.
	\end{itemize}
The above assumptions ensure that
\begin{align*}
	\E [\exp(-H(X))] < \infty, \,\, - \E [H(X)\exp(-H(X))] \le \E[\exp(-2H(X))] < \infty
\end{align*}
where $\E[\cdot]$ denotes expectation with respect to $P$ \cite{Mitter_Newton}. It is also straightforward to verify that, under (H1) and (H2), $f$ and $g$ are of at most quadratic growth in $x$ and their derivatives are of at most linear growth in $x$, uniformly in $0 \le t \le T$.

We will need to consider the case when the initial condition $X_0$ is nonrandom, i.e., $\mu$ in \cref{eq:ref_process} is a Dirac measure centered at some $z \in \Reals^n$. It is convenient, just as in \cite{Mitter_Newton}, to define for each $z \in \Reals^n$ and each $0 \le s \le T$ the process $(X^{z,s}_t : s \le t \le T)$ as the solution of \eqref{eq:ref_process} on the time interval $s \le t \le T$ with initial condition $X^{z,s}_s = z$. Then we will denote by $P^z$ the probability law of the process $X^{z,0}$. We also define the measurable maps
\begin{align}\label{eq:partial_H}
	H \colon [0,T] \times \Reals^n \times \sX \to \Reals, \qquad 
	H(s,z,X^{z,s}) \deq \int^T_s f(X^{z,s}_t,t) \dif t + g(X^{z,s}_T)
\end{align}
[so that, in particular, $H(X^{z,0}) = H(0,z,X^{z,0})$] and
\begin{align}\label{eq:value_function}
	v \colon \Reals^n \times [0,T] \to \Reals, \qquad 
	v(z,s) \deq - \log \E \exp(-H(s,z,X^{z,s})),
\end{align}
which is the equilibrium free energy of $P^{z,s}$ corresponding to the Hamiltonian function $H(s,z,\cdot)$.

\section{The optimal control problem}
\label{sec:control}

We now consider the controlled equation
\begin{align}\label{eq:controlled_process}
	\tilde{X}_t = z + \int^t_0 \Big(b(\tilde{X}_s,s) + a(\tilde{X}_s,s)u(\tilde{X}_s,s)\Big) \dif s + \int^t_0 \sigma(\tilde{X}_s,s)\dif \tilde{W}_s,
\end{align}
where $a(x,s) \deq \sigma(x,s)\sigma(x,s)^\trn$, and where the measurable function $u \colon \Reals^n \times [0,T] \to \Reals^n$ is the control. Let $\sU$ denote the set of all  $u$ with the following properties:
\begin{itemize}
	\item[(U1)] $u$ is continuous;
	\item[(U2)] $\E Z^u = 1$, where
	\begin{align}\label{eq:att_density}
		Z^u \deq \exp\left(\int^T_0 u^\trn \sigma(X^{z,0}_t,t)\dif W_t - \frac{1}{2}\int^T_0 |\sigma^\trn u(X^{z,0}_t,t)|^2 \dif t\right),
	\end{align}
\end{itemize}
where the objects $(\Omega,\cF,(\cF_t),{\mathbb P})$, $W$, and $X^{z,0}$ are as defined above. We then have the following:

\begin{proposition}[Mitter--Newton] If $b$ and $\sigma$ satisfy (R2) and if $u \in \sU$, then the controlled equation \eqref{eq:controlled_process} has a weak solution which is unique in probability law.
\end{proposition}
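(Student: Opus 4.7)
The approach is a direct application of Girsanov's theorem in both directions. For existence I would start from the filtered probability space $(\Omega,\cF,(\cF_t),\mathbb{P})$ carrying the reference process $X^{z,0}$ solving \eqref{eq:ref_process} from initial condition $z$, and change measure via $\dif\mathbb{Q}/\dif\mathbb{P} = Z^u$ with $Z^u$ as in \eqref{eq:att_density}. Condition (U2) asserts exactly that $Z^u$ has mean one under $\mathbb{P}$, which elevates the exponential local martingale $(Z^u_t)_{t \le T}$ to a bona fide probability density; this is the classical hypothesis under which Girsanov's theorem applies without any further integrability check on $u$.

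Given this, Girsanov's theorem implies that
\[
\tilde{W}_t \deq W_t - \int_0^t \sigma^\trn(X^{z,0}_s,s)\,u(X^{z,0}_s,s)\dif s
\]
is an $m$-dimensional standard Brownian motion under $\mathbb{Q}$ with respect to the filtration $(\cF_t)$. Substituting $\dif W_s = \dif\tilde{W}_s + \sigma^\trn u(X^{z,0}_s,s)\dif s$ into \eqref{eq:ref_process} and using $\sigma\sigma^\trn = a$ shows that the same sample paths of $X^{z,0}$ satisfy \eqref{eq:controlled_process} on $(\Omega,\cF,(\cF_t),\mathbb{Q})$ with driving Brownian motion $\tilde{W}$. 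Thus $(\tilde{X},\tilde{W}) \deq (X^{z,0},\tilde{W})$ is a weak solution.

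For uniqueness in law I would invert the construction. Given any weak solution $(\tilde{X}',\tilde{W}')$ of \eqref{eq:controlled_process} on some space $(\Omega',\cF',(\cF'_t),\mathbb{Q}')$, set
\[
Y_t \deq \exp\left(-\int_0^t u^\trn\sigma(\tilde{X}'_s,s)\dif\tilde{W}'_s - \tfrac{1}{2}\int_0^t |\sigma^\trn u(\tilde{X}'_s,s)|^2\dif s\right)
\]
and define $\dif\mathbb{P}' \deq Y_T\dif\mathbb{Q}'$. Under $\mathbb{P}'$, a symmetric Girsanov argument shows that $W'_t \deq \tilde{W}'_t + \int_0^t \sigma^\trn u(\tilde{X}'_s,s)\dif s$ is a standard Brownian motion and that $\tilde{X}'$ solves the reference equation \eqref{eq:ref_process} from initial condition $z$ driven by $W'$. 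Since (R2) guarantees a unique strong (hence unique-in-law) solution of \eqref{eq:ref_process}, the law of $\tilde{X}'$ under $\mathbb{P}'$ is fixed; pulling back by $\dif\mathbb{Q}'/\dif\mathbb{P}' = 1/Y_T$, which coincides with $Z^u$ evaluated along the solution path, determines the law of $\tilde{X}'$ under $\mathbb{Q}'$ uniquely.

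The main obstacle is ensuring that $Z^u$ is genuinely a probability density rather than merely a nonnegative local martingale: for a continuous feedback $u$ with no growth control beyond what is inherited from $\sigma$, one would ordinarily need a Novikov- or Kazamaki-type integrability estimate to make Girsanov's theorem applicable. The definition of the admissible class $\sU$ sidesteps this precisely by building the martingale property $\mathbb{E} Z^u = 1$ into (U2). A secondary technical point is that in the reverse direction one must check that $Y_T$ similarly has mean one under $\mathbb{Q}'$, but this is automatic from the identity $Y_T \cdot Z^u = 1$ along solution paths together with the forward Girsanov formula, so no additional hypothesis is required beyond (U1)--(U2) and (R2).
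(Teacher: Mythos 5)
Your overall strategy---change of measure via Girsanov in both directions, using (U2) as the hypothesis that makes the existence direction go through---is the standard route and is certainly the approach Mitter and Newton had in mind. The paper itself states this proposition without proof, attributing it to \cite{Mitter_Newton}, so there is no in-paper argument to compare against; the existence half of your proof is correct as written.

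There is, however, a genuine gap in the uniqueness direction, precisely at the point you flag as a ``secondary technical point.'' Your claim that $\E_{\mathbb{Q}'}[Y_T] = 1$ is ``automatic from the identity $Y_T \cdot Z^u = 1$ along solution paths'' is circular for an \emph{arbitrary} weak solution $(\tilde{X}',\tilde{W}')$ on some unrelated space $(\Omega',\cF',(\cF'_t),\mathbb{Q}')$. The identity you invoke holds for the \emph{particular} weak solution $(X^{z,0},\tilde{W})$ you constructed in the existence step, where $Y_T = 1/Z^u$ pointwise and hence $\E_{\mathbb{Q}}[Y_T] = \E_{\mathbb{P}}[Y_T Z^u] = 1$. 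But for a generic weak solution there is no $Z^u$ living on $\Omega'$ to pair with $Y_T$ until \emph{after} you have changed measure via $Y_T$, and changing measure via $Y_T$ is exactly what requires the a priori knowledge that $\E_{\mathbb{Q}'}[Y_T] = 1$. Condition (U2) gives you nothing here: it is a statement about the reference process under $\mathbb{P}$, not about an arbitrary weak solution under $\mathbb{Q}'$. A nonnegative exponential local martingale is always a supermartingale, so $\E_{\mathbb{Q}'}[Y_T] \le 1$; strict inequality cannot be ruled out by the argument as written.

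The correct repair is localization, and it avoids the issue entirely. Set $\tau_n = \inf\{t : |\tilde{X}'_t| \ge n\} \wedge T$. Since $u$ is continuous by (U1) and $\sigma$ is bounded by (R2), the stopped exponential $Y_{t \wedge \tau_n}$ satisfies Novikov's condition and is a true martingale with $\E_{\mathbb{Q}'}[Y_{\tau_n}]=1$. Girsanov applied up to $\tau_n$ shows that under $\dif\mathbb{P}'_n = Y_{\tau_n}\dif\mathbb{Q}'$ the stopped pair $(\tilde{X}'_{\cdot\wedge\tau_n}, W'_{\cdot\wedge\tau_n})$ solves the reference SDE on $[0,\tau_n]$, so its law is pinned down by pathwise uniqueness under (R2). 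Pulling back via $Y_{\tau_n}^{-1}$, the law of $\tilde{X}'_{\cdot\wedge\tau_n}$ under $\mathbb{Q}'$ restricted to $\{\tau_n = T\}$ is uniquely determined. Since $\tilde{X}'$ has continuous paths, $\mathbb{Q}'(\tau_n = T) \uparrow 1$, and uniqueness in law follows. This is the Bene\v{s}-style argument the paper is implicitly referencing, and it does not need $\E_{\mathbb{Q}'}[Y_T]=1$ at any stage.
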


Following Bene\v{s} \cite{Benes}, we will refer to $u \in \sU$ as \textit{admissible controls}, the maps $(x,s) \mapsto b(x,s) + a(x,s)u(x,s)$ as \textit{admissible drifts}, and to $Z^u$ in \cref{eq:att_density} as \textit{attainable densities}. The problem, then, is to show that the Gibbs density
\begin{align}\label{eq:Gibbs_density}
	\frac{\dif P^{*,z}}{\dif P^z} = \frac{\exp(-H)}{\int_\sX \exp(-H)\dif P^z} = \frac{\exp(-H(0,z,\cdot))}{\int_\sX \exp(-H(0,z,\cdot))\dif P^z}.
\end{align}
is an attainable density and to identify the admissible control $u_* \in \sU$, such that
\begin{align*}
	\frac{\dif P^{*,z}}{\dif P^z} = Z^{u_*}.
\end{align*}
Let $(\tilde{\Omega},\tilde{\cF},(\tilde{\cF}_t),\tilde{{\mathbb P}},\tilde{X},\tilde{W})$ be a weak solution of \cref{eq:controlled_process} corresponding to an admissible control $u \in \sU$. Let $\tilde{P}$ denote the probability law of $\tilde{X}$. We then define the control cost of $u$ by
\begin{align}\label{eq:control_cost}
	\begin{split}
	J(u,z) &\deq  \langle H, \tilde{P} \rangle + D(\tilde{P}\|P^z) \\
	&= \tilde{\E}\left[ \int^T_0 \Big(f(\tilde{X}_t,t)  +  \frac{1}{2} |\sigma^\trn u(\tilde{X}_t,t)|^2\Big) \dif t +  g(\tilde{X}_T)\right].
	\end{split}
\end{align}
The optimal control that attains the minimum of $J(u,z)$ is given by the following theorem, in the spirit of Theorem~4.2 in \cite{Mitter_Newton}:
\begin{theorem}\label{thm:main} Suppose that $b$, $\sigma$, $f$, and $g$ satisfy (R2), (H1), (H2). Define the function $u_* \colon \Reals^n \times [0,T] \to \Reals^n$ by
	\begin{align}\label{eq:ustar}
		u_*(x,t) \deq - \left(\frac{\partial v}{\partial x}(x,t)\right)^\trn,
	\end{align}
	where $v$ is defined in \cref{eq:value_function}. Then $u_*$ is an admissible control, and for all $z \in \Reals^n$ and all $\tilde{P} \in \cP(\sX)$ (not necessarily arising from an admissible control),
	\begin{align}\label{eq:ustar_optimality}
		J(u_*,z) \le D(\tilde{P} \| P^z) + \langle H(0,z,\cdot), \tilde{P}_X \rangle.
	\end{align}
\end{theorem}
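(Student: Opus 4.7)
The plan is to invoke \Cref{prop:Gibbs} to recognize the right-hand side of \cref{eq:ustar_optimality} as the free energy $F(\tilde P) = \langle H(0,z,\cdot), \tilde P\rangle + D(\tilde P\|P^z)$, which by the Gibbs variational principle is bounded below by
\[
	i(H(0,z,\cdot)) = -\log\int_\sX \exp(-H(0,z,\cdot))\dif P^z = v(z,0).
\]
The theorem thus reduces to two claims: (i) $u_* \in \sU$, and (ii) $J(u_*,z) = v(z,0)$. Both hinge on a Hamilton--Jacobi--Bellman characterization of $v$ obtained via the classical log-transform of the linear Feynman--Kac PDE for $w \deq \exp(-v)$.

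First I would observe that $w(x,s) = \E\exp(-H(s,x,X^{x,s}))$ is the Feynman--Kac functional for the reference diffusion with potential $f$ and terminal cost $\exp(-g)$, so under (R2), (H1), (H2) standard parabolic theory gives $w \in C^{2,1}$, $w > 0$, and $w$ solves a backward Kolmogorov equation with killing at rate $f$. Differentiating $v = -\log w$ then produces the HJB equation
\[
	\frac{\partial v}{\partial s} + \frac{\partial v}{\partial x}\,b + \frac{1}{2}\sum_{ij} a_{ij}\frac{\partial^2 v}{\partial x_i \partial x_j} - \frac{1}{2}\bigl|\sigma^\trn u_*\bigr|^2 + f = 0, \qquad v(x,T) = g(x),
\]
with $u_* = -(\partial v/\partial x)^\trn$. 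The quadratic/linear growth hypotheses on $f,g$ and their derivatives propagate through the parabolic estimates to linear growth of $\partial v/\partial x$, so $u_*$ is continuous and satisfies $|u_*(x,t)| \le c(1+|x|)$, giving (U1). Combined with boundedness of $\sigma$ from (R2), the exponential integrability (R1), and the moment bounds on the reference diffusion, a Novikov-type argument (or the localization used in \cite{Mitter_Newton}) yields $\E Z^{u_*} = 1$, establishing (i).

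For (ii), let $\tilde X$ be a weak solution of \cref{eq:controlled_process} driven by $u_*$, and apply It\^o's formula to $v(\tilde X_t, t)$. The drift of this semimartingale is
\[
	\frac{\partial v}{\partial s} + \frac{\partial v}{\partial x}\bigl(b + a u_*\bigr) + \frac{1}{2}\sum_{ij} a_{ij}\frac{\partial^2 v}{\partial x_i\partial x_j},
\]
which collapses under the identities $\frac{\partial v}{\partial x} a u_* = -|\sigma^\trn u_*|^2$ and the HJB equation above to $-f - \tfrac{1}{2}|\sigma^\trn u_*|^2$. Integrating over $[0,T]$, taking $\tilde\E$, and invoking the terminal condition $v(\tilde X_T,T) = g(\tilde X_T)$ yields
\[
	v(z,0) = \tilde\E\left[\int^T_0 \Bigl(f(\tilde X_t,t) + \tfrac{1}{2}|\sigma^\trn u_*(\tilde X_t,t)|^2\Bigr)\dif t + g(\tilde X_T)\right] = J(u_*,z),
\]
once the expectation of the accompanying stochastic integral is shown to vanish, which follows from the linear growth of $\sigma^\trn\partial v/\partial x$ together with uniform-in-$t$ second moment bounds on $\tilde X$ provided by the linear growth of $b$ and $u_*$ via Gronwall's inequality.

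The main obstacle is the regularity of $v$: establishing $v \in C^{2,1}$ with sufficiently tight growth control on $\partial v/\partial x$ to legitimize both the log-transform step and the Novikov/moment estimates above. This rests on the full strength of (R2), (H1), (H2) and mirrors the regularity analysis carried out in \cite{Mitter_Newton}; everything else is the Gibbs variational principle together with the It\^o computation sketched above.
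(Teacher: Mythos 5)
Your skeleton matches the paper's: reduce via \Cref{prop:Gibbs} to the two claims (i) $u_*\in\sU$ and (ii) $J(u_*,z)=v(z,0)$, then obtain (ii) from the HJB equation for $v=-\log\rho$ via It\^o and Girsanov. But there is a genuine gap in how you justify the HJB step and the admissibility of $u_*$ under the stated hypotheses alone.

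You assert that ``under (R2), (H1), (H2) standard parabolic theory gives $w\in C^{2,1}$, $w>0$, and $w$ solves a backward Kolmogorov equation.'' This is false as stated. Hypothesis (R2) only bounds $\sigma$ from above; it does not impose uniform ellipticity, so the Kolmogorov operator may be degenerate and classical $C^{2,1}$ solvability of the Feynman--Kac PDE is not available. Nor do (H1), (H2) make $f$ or $g$ bounded (they allow quadratic growth), so even granting ellipticity you cannot directly quote a bounded-coefficient parabolic regularity theorem. The paper circumvents this in two ways you omit. First, it establishes differentiability of $z\mapsto\rho(z,s)$ not by PDE theory but by probabilistic stochastic-flow estimates: the Jacobian process $\Psi^{z,s}_t$ solving the variational SDE \eqref{eq:variational_SDE}, moment and continuity bounds on the flow, and a mean-value argument, culminating in the representation $u_*(z,s)=\E[\xi(z,s)\Theta(z,s)]/\E[\Theta(z,s)]$ with $\xi(z,s)=\int_s^T\frac{\partial f}{\partial x}(X^{z,s}_t,t)\Psi^{z,s}_t\dif t+\frac{\partial g}{\partial x}(X^{z,s}_T)\Psi^{z,s}_T$. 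This gives well-definedness and continuity of $u_*$ \emph{without} assuming $v\in C^{2,1}$. Second, the It\^o/Girsanov verification of $J(u_*,z)=v(z,0)$ is carried out only in a \emph{special case} where $b,f,g$ are bounded and $\sigma$ satisfies the uniform ellipticity condition \eqref{eq:uniform_ellipticity}; the general case is then recovered by approximating $b,f,g,\sigma$ by $b_N,f_N,g_N,\sigma_N$ (truncating the growth and adding an $N^{-1}I_n$ block to $\sigma$ to restore ellipticity), proving the result for each $N$, and passing to the limit. Your proposal has no analogue of this approximation step; without it, claim (i) (Novikov) and claim (ii) (It\^o with a classical $C^{2,1}$ solution and vanishing martingale term) are not justified under (R2), (H1), (H2) alone. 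A secondary slip: you invoke (R1) in the admissibility argument, but (R1) concerns the initial law $\mu$ and is not among the theorem's hypotheses -- the initial condition here is the fixed point $z$.
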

\begin{proof}
	
	We follow the overall logic of the proof of Theorem~4.2 in \cite{Mitter_Newton}, but use slightly different techniques in order to adapt the argument to our assumptions and to make it relatively self-contained. 
		
\paragraph{Stochastic flow estimates}	First, from the theory of stochastic flows \cite[Ch.~4]{Kunita_book} it follows that, for all $0 \le s \le t \le T$, the map $z \mapsto X^{z,s}_t$ is differentiable, and the Jacobian $\Psi^{z,s}_t \deq \frac{\partial}{\partial z}X^{z,s}_t$, taking values in $\Reals^{n \times n}$, satisfies
	\begin{align}\label{eq:variational_SDE}
		\Psi^{z,s}_t = I + \int^t_s \frac{\partial b}{\partial x}(X^{z,s}_r,r) \Psi^{z,s}_r \dif r + \sum^m_{i=1} \int^t_s \frac{\partial \sigma_i}{\partial x}(X^{z,s}_r,r) \Psi^{z,s}_r \dif W^i_r,
	\end{align}
	where $\sigma_i(\cdot,\cdot)$ denotes the $i$th column of $\sigma(\cdot,\cdot)$, $W^i$ denotes the $i$th coordinate of $W$,  $\frac{\partial b}{\partial x}(x,t)$ denotes the Jacobian of $b(x,t)$ w.r.t.\ the space variable $x$, etc. Moreover, the following moment estimates hold for each $p \ge 1$ uniformly in $s \le t \le T$:
	\begin{align}
		\E |X^{z,s}_t|^{p} &\le C_{T,p}(1+|z|^{p}),  \label{eq:X_moment}\\
		\E \| \Psi^{z,s}_t \|^{p} & \le C_{T,p}, \label{eq:Psi_moment}
	\end{align}
where $C_{T,p} < \infty$ is a constant not depending on $z$ or $s$. For any $z,\bar{z} \in \Reals^n$, any $0 \le s \le t \le T$, and any $p \ge 2$, 
\begin{align*}
& \E \sup_{s \le r \le t} |X^{\bar{z},s}_r - X^{z,s}_r|^p \\
&\qquad \le (m+2)^{p-1} \Bigg(|\bar{z}-z|^p + \E \sup_{s \le r \le t} \Bigg|\int^r_s \big(b(X^{\bar{z},s}_\tau,\tau) - b(X^{z,s}_\tau,\tau)\big) \dif \tau\Bigg|^p \\
& \qquad \qquad \qquad + \sum^m_{i=1}\E \sup_{s \le r \le t}\Bigg| \int^r_s \big(\sigma_i(X^{\bar{z},s}_\tau,\tau) - \sigma_i(X^{z,s}_\tau,\tau)\big)\dif W^i_\tau \Bigg|^p \Bigg) \\
& \qquad \le (m+2)^{p-1} \Bigg(|\bar{z}-z|^p + \E  \Bigg|\int^t_s \big(b(X^{\bar{z},s}_\tau,\tau) - b(X^{z,s}_\tau,\tau)\big) \dif \tau\Bigg|^p \\
& \qquad \qquad \qquad + \sum^m_{i=1}\E\Bigg| \int^t_s \big(\sigma_i(X^{\bar{z},s}_\tau,\tau) - \sigma_i(X^{z,s}_\tau,\tau)\big)\dif W^i_\tau \Bigg|^p \Bigg),
\end{align*}
where we have used Jensen's inequality and Doob's submartingale inequality. Using (R2), H\"older's inequality, and the Burkholder--Davis--Gundy inequality, we have
\begin{align*}
 \E  \Bigg|\int^t_s \big(b(X^{\bar{z},s}_\tau,\tau) - b(X^{z,s}_\tau,\tau)\big) \dif \tau\Bigg|^p \le K^p T^{p-1} \int^t_s \sup_{s \le r \le \tau} |X^{\bar{z},s}_r - X^{z,s}_r|^p \dif \tau
\end{align*}
and
\begin{align*}
	\E\Bigg| \int^t_s \big(\sigma_i(X^{\bar{z},s}_\tau,\tau) - \sigma(X^{z,s}_\tau,\tau)\big)\dif W^i_\tau \Bigg|^p \le K^p T^{p-\frac{1}{2}} \int^t_s \E \sup_{s \le r \le \tau} |X^{\bar{z},s}_r - X^{z,s}_\tau|^p \dif \tau.
\end{align*}
Putting all of this together and using Gr\"onwall's lemma, we get
\begin{align}\label{eq:flow_continuity}
	\E \sup_{s \le t \le T} |X^{\bar{z},s}_t - X^{z,s}_t|^p \le \tilde{C}_{T,K,p} |\bar{z}-z|^p \qquad \text{for all } z,\bar{z},s.
\end{align}
A similar argument leads to the estimate
\begin{align}\label{eq:flow_moments}
	\E \sup_{s \le t \le T} |X^{z,s}_t|^p \le \tilde{C}_{T,K,p} (1+|z|^p) \qquad \text{for all } z,s.
\end{align}
In the above, $\tilde{C}_{T,K,p} < \infty$ is a constant that does not depend on $z,\bar{z},s$.

We now apply the mean-value theorem to write
\begin{align*}
	X^{\bar{z},s}_t - X^{z,s}_t &= \bar{z} - z + \int^t_s \Bigg(\int^1_0 \frac{\dif}{\dif v} b(X^{z,s}_\tau + v(X^{\bar{z},s}_\tau - X^{z,s}_\tau),\tau)\dif v\Bigg) \dif \tau  \\
	& \quad + \sum^m_{i=1} \int^t_s \Bigg(\int^1_0 \frac{\dif}{\dif v} \sigma_i(X^{z,s}_\tau + v(X^{\bar{z},s}_\tau - X^{z,s}_\tau),\tau) \dif v\Bigg) \dif W^i_\tau \\
	&= \bar{z} - z + \int^t_s \Bigg(\int^1_0 \frac{\partial b}{\partial x}(X^{z,s}_\tau + v (X^{\bar{z},s}_\tau - X^{z,s}_\tau),\tau) \dif v\Bigg) (X^{\bar{z},s}_\tau - X^{z,s}_\tau) \dif \tau \\
	& \quad + \sum^m_{i=1} \int^t_s \Bigg(\int^1_0 \frac{\partial \sigma_i}{\partial x}(X^{z,s}_\tau + v(X^{\bar{z},s}_\tau - X^{z,s}_\tau),\tau)\dif v\Bigg) (X^{\bar{z},s}_\tau - X^{z,s}_\tau) \dif W^i_\tau.
\end{align*}
Let $A$ be an arbitrary compact subset of $\Reals^n$. Then, with the help of \cref{eq:variational_SDE}, \cref{eq:flow_continuity}, \cref{eq:flow_moments}, and (R2),  similar reasoning as the one used for proving \cref{eq:controlled_process} shows that the inequality
\begin{align}\label{eq:flow_diff}
	\E \sup_{s \le t \le T} |X^{\bar{z},s}_t - X^{z,s}_t - \Psi^{z,s}_t (\bar{z}-z)|^p = O(|\bar{z}-z|^{2p})
\end{align}
holds uniformly on $A \times [0,T]$. Next, let
\begin{align*}
	\xi(z,s) \deq \int^T_s \frac{\partial f}{\partial x}(X^{z,s}_t,t) \Psi^{z,s}_t \dif t + \frac{\partial g}{\partial x}(X^{z,s}_T) \Psi^{z,s}_T.
\end{align*}
Then, using the mean-value theorem again, we have
\begin{align*}
&	\E |H(s,\bar{z},X^{\bar{z},s}) - H(s,z,X^{z,s}) - \xi(z,s)(\bar{z}-z)|^p \\
& \le 2^{p-1} \Bigg(\E\Bigg| \int^T_s \Bigg(\int^1_0 \frac{\dif}{\dif v}f(X^{z,s}_t + v(X^{\bar{z},s}_t - X^{z,s}_t),t)\dif v\Bigg) (X^{\bar{z},s}_t - X^{z,s}_t)\dif t \\
& \qquad \qquad \qquad \qquad - \int^T_s \frac{\partial f}{\partial x}(X^{z,s}_t,t) \Psi^{z,s}_t \dif t\Bigg|^p \\
& \qquad \qquad + \E\Bigg|\int^1_0 \frac{\dif}{\dif v} g(X^{z,s}_T + v(X^{\bar{z},s}_T - X^{z,s}_T)) \dif v -  \frac{\partial g}{\partial x}(X^{z,s}_T) \Psi^{z,s}_T \Bigg|^p\Bigg).
\end{align*}
Using H\"older's inequality, (H1), (H2), and \cref{eq:flow_continuity,eq:flow_moments,eq:flow_diff}, 
\begin{align*}
	& \E\Bigg| \int^T_s \Bigg(\int^1_0 \frac{\dif}{\dif v}f(X^{z,s}_t + v(X^{\bar{z},s}_t - X^{z,s}_t),t)\dif v\Bigg) (X^{\bar{z},s}_t - X^{z,s}_t)\dif t  \\
	& \qquad \qquad \qquad \qquad - \int^T_s \frac{\partial f}{\partial x}(X^{z,s}_t,t) \Psi^{z,s}_t \dif t\Bigg|^p = O(|\bar{z}-z|^{2p})
\end{align*}
and
\begin{align*}
	\E\Bigg|\int^1_0 \frac{\dif}{\dif v} g(X^{z,s}_T + v(X^{\bar{z},s}_T - X^{z,s}_T)) \dif v -  \frac{\partial g}{\partial x}(X^{z,s}_T) \Psi^{z,s}_T \Bigg|^p = O(|\bar{z}-z|^{2p}),
\end{align*}
both holding uniformly on $A \times [0,T]$. Consequently,
\begin{align*}
	\E|H(s,\bar{z},X^{\bar{z},s})-H(s,z,X^{z,s})-\xi(z,s)(\bar{z}-z)|^p  = O(|\bar{z}-z|^{2p}).
\end{align*}
Thus, using the chain rule, (H1), and (H2), we conclude that
\begin{align*}
	\E|\Theta(\bar{z},s)-\Theta(z,s)-\xi(z,s)\Theta(z,s)(\bar{z}-z)|^p = o(|\bar{z}-z|^p)
\end{align*}
holds uniformly on $A \times [0,T]$, where
\begin{align*}
	\Theta(z,s) \deq \exp(-H(s,z,X^{z,s})).
\end{align*}
This shows that $\frac{\partial}{\partial z}\rho(z,s) = \E \xi(z,s)\Theta(z,s)$, where $\rho(z,s) \deq \E \Theta(z,s)$. Since
\begin{align*}
	\inf_{z \in A} \inf_{s \in [0,T]} \rho(z,s) \ge \inf_{z \in A} \inf_{s \in [0,T]} \exp \E \log \Theta(z,s) > 0,
\end{align*}
we arrive at the following stochastic representation of $u_*(\cdot,\cdot)$:
\begin{align*}
	u_*(z,s) = \frac{\E \xi(z,s)\Theta(z,s)}{\E \Theta(z,s)}.
\end{align*}

\paragraph{Analysis in a special case} Next, just as in \cite{Mitter_Newton}, we consider the special case when $b, f, g$ are bounded and there exists a constant $c > 0$, such that
\begin{align}\label{eq:uniform_ellipticity}
	z^\trn a(\tilde{z},t) z \ge c|z|^2, \qquad \forall z,\tilde{z} \in \Reals^n,\, t \in [0,T].
\end{align}
Then, by the Feynman--Kac formula \cite{Kallenberg_probability}, $\rho$ is a $C^{2,1}$ solution of the PDE
\begin{align*}
	\frac{\partial}{\partial t}\rho(z,t) + \cL \rho(z,t) - f(z,t)\rho(z,t) = 0 \text{ on } \Reals^n \times [0,T], \qquad \rho(z,T) = \exp(-g(z))
\end{align*}
where 
\begin{align*}
	\cL = \sum_i b_i \frac{\partial}{\partial z_i} + \frac{1}{2}\sum_{i,j} a_{ij} \frac{\partial^2}{\partial z_i \partial z_j}
\end{align*}
is the generator of the reference process \cref{eq:ref_process}. The function $v = -\log \rho$ satisfies the nonlinear PDE
\begin{align}\label{eq:HJB}
	\frac{\partial}{\partial t}v(z,t) + \cL v(z,t) + f(z,t) = \frac{1}{2} \frac{\partial v}{\partial z}(z,t) a(z,t) \left(\frac{\partial v}{\partial z}(z,t)\right)^\trn
\end{align}
on $\Reals^n \times [0,T]$, subject to the terminal condition $v(z,T) = g(z)$. The quantity on the right-hand side of \cref{eq:HJB} arises from the identity
\begin{align*}
	\min_{u \in \Reals^n} \Bigg\{ u^\trn a(z,t) \left(\frac{\partial v}{\partial z}(z,t)\right)^\trn + u^\trn a(z,t) u \Bigg\} = -  \frac{1}{2}\frac{\partial v}{\partial z}(z,t) a(z,t) \left(\frac{\partial v}{\partial z}(z,t)\right)^\trn,
\end{align*}
where, since $a(z,t)$ is positive definite by \eqref{eq:uniform_ellipticity}, the minimum is achieved uniquely by $u_*(z,t) = - \big(\frac{\partial v}{\partial z}(z,t)\big)^\trn$; this observation allows us to interpret \cref{eq:HJB} as the Hamilton--Jacobi--Bellman PDE for the optimal stochastic control problem of minimizing the expected cost in \cref{eq:control_cost} and $v$ as the corresponding value function \cite{Fleming_1975}.

Now, it follows from \cref{eq:X_moment,eq:Psi_moment} and from the boundedness of $f$, $g$, and their derivatives that $u_*$ is also bounded, so (U2) is satisfied by Novikov's theorem \cite{Kallenberg_probability}. Thus, $u_*$ is an admissible control. Therefore, by Girsanov's theorem,
\begin{align*}
	W^*_t = W_t - \int^t_0 \sigma^\trn u_*(X^{z,0}_s,s) \dif s
\end{align*}
is a standard Brownian motion under the probability measure $P^* \equiv P^{u_*}$, given by
\begin{align*}
	\frac{\dif P^{u_*}}{\dif P^{z}} = Z^{u_*}.
\end{align*}
Consequently, applying It\^o's rule and \cref{eq:HJB}, we have
\begin{align*}
	g(X^{z,0}_T) &= v(z,0) - \int^T_0 \Big( f(X^{z,0}_t,t) - \frac{1}{2}|\sigma^\trn u_*(X^{z,0}_t,t)|^2 \Big) \dif t  - \int^T_0 u_*^\trn \sigma(X^{z,0}_t,t) \dif W_t \\
	&= v(z,0) - \int^T_0 \Big(f(X^{z,0}_t,t) + \frac{1}{2}|\sigma^\trn u_*(X^{z,0}_t,t)|^2\Big) \dif t  - \int^T_0 u_*^\trn \sigma (X^{z,0}_t,t) \dif W^*_t. 
\end{align*}
Now, since $(\Omega, \cF, (\cF_t), P^*, X^{z,0}, W^*)$ is a weak solution of \cref{eq:controlled_process} and since $f$, $g$, and $u_*$ are bounded, we can take expectations w.r.t.\ $P^*$ to get
\begin{align*}
	v(z,0) &= \E^* \Bigg[\int^T_0 \Bigg(f(X^{z,0}_t,t) + \frac{1}{2}|\sigma^\trn u_*(X^{z,0}_t,t)|^2\Bigg)\dif t + g(X^{z,0}_T)\Bigg] \\
	&= J(u_*, z).
\end{align*}
Since $v(z,0)$ is the equilibrium free energy of $P^z$ corresponding to the Hamiltonian function $H(\cdot) = H(0,z,\cdot)$ defined in \cref{eq:Hamiltonian}, we have proved \cref{eq:ustar_optimality}. Since $P^*$ is the unique minimizer of the free energy $F(\cdot)$ w.r.t.\ $P^z$, it follows that
\begin{align*}
	Z^{u_*} = \frac{\Theta(z,0)}{\rho(z,0)}
\end{align*}
almost surely.

\paragraph{Approximation} Finally, we remove the additional restrictions on $b$, $\sigma$, $f$, and $g$ and show that this general setting can be reduced to the above special case using an approximation argument. Following \cite{Mitter_Newton}, for any $N = 1,2,\dots$ let
\begin{align*}
	b_N(z,t) &\deq b(z,t)\exp(-|z|^2/N), \\
	f_N(z,t) &\deq f(z,t)\exp(-|z|^2/N), \\
	g_N(z,t) &\deq g(z,t)\exp(-|z|^2/N), \\
	\sigma_N(z,t) &\deq [\begin{matrix} \sigma(z,t) & N^{-1}I_n \end{matrix}] \qquad \text{(an $n \times (m + n)$ matrix)}.
\end{align*}
It follows from (R1), (R2), (H1), (H2) that 
\begin{align*}
	&\sup_{z \in \Reals^n}\sup_{0 \le t \le T}|b_N(z,t)|\\
	&\qquad\le \sup_{z \in \Reals^n}\sup_{0 \le t \le T}|b(z,t)-b(0,t)|e^{-|z|^2/N} + \sup_{z \in \Reals^n}\sup_{0 \le t \le T}|b(0,t)|e^{-|z|^2/N} \\
	&\qquad\le K \sup_{z \in \Reals^n}(|z|e^{-|z|^2/N} + 1) < \infty
\end{align*}
and
\begin{align*}
	\sup_{z \in \Reals^n}\sup_{0 \le t \le T}|f_N(z,t)| &\le \sup_{z \in \Reals^n}\sup_{0 \le t \le T}|f(z,t)|e^{-|z|^2/N} \\
	&\le \sup_{z \in \Reals^n} C(1+|z|)^2 e^{-|z|^2/N} < \infty
\end{align*}
for all $z \in \Reals^n$, and similarly
\begin{align*}
	\sup_{z \in \Reals^n} |g_N(z)| < \infty.
\end{align*}
Thus, $b_N,f_N,g_N$ are bounded and $\sigma_N$ satisfies the uniform ellipticity condition \eqref{eq:uniform_ellipticity} with $a_N(z,t) = \sigma_N(z,t)\sigma_N(z,t)^\trn$. Moreover, $b_N,\sigma_N$ satisfy (R2), $f_N,g_N$ satisfy (H1), (H2) all uniformly in $N$, and $b_N,\sigma_N,f_N,g_N$ and $\frac{\partial b_N}{\partial z},\frac{\partial \sigma_N}{\partial z},\frac{\partial f_N}{\partial z},\frac{\partial g_N}{\partial z}$ converge to $b, [\sigma, 0], f, g$ and to $\frac{\partial b}{\partial z}, [\frac{\partial \sigma}{\partial z },0],\frac{\partial f}{\partial z}, \frac{\partial g}{\partial z}$ respectively, uniformly on compacts. Following \cite{Mitter_Newton}, we will attach a subscript or a superscript $N$ to various processes and random variables defined using $b_N,\sigma_N,f_N,g_N$ instead of $b,\sigma,f,g$, and with $W_t$ replaced with an $(m+n)$-dimensional Brownian motion $(W_t,B_t)$. In particular, for each $N$, we have the processes
$$
X^{N,z,s}_t = z + \int^t_s b_N(X^{N,z,s}_r,r)\dif r + \int^t_s \sigma(X^{N,z,s}_r,r) \dif W_r + N^{-1}(B_t - B_s), \, s \le t \le T
$$
for all $z \in \Reals^n$ and $0 \le s \le T$.  Owing to the properties of $b_N,\sigma_N,f_N,g_N$, the corresponding optimal controls $u_{*N}(\cdot,\cdot)$ satisfy (U1) and (U2). Moreover, the same arguments as the ones used in \cite{Mitter_Newton} show that, for any bounded set $A \subset \Reals^n$,
\begin{align*}
	u_{*N}(z,t) \to u_*(z,t) \qquad \text{as } N \to \infty
\end{align*}
uniformly on $A \times [0,T]$ and
\begin{align*}
	Z_N^{u_{*N}} = \frac{\Theta_N(z,0)}{\rho_N(z,0)} \to \frac{\Theta(z,0)}{\rho(z,0)} = Z^{u_*}, \qquad \text{in probability}.
\end{align*}
Thus, $u_*$ satisfies (U1) and (U2). This proves \eqref{eq:ustar_optimality} in the general case.
\end{proof}

We now turn to the case when the initial condition is random. To that end, consider the controlled process
\begin{align}\label{eq:random_init_cond}
	\begin{split}
		\tilde{X}_t &= \tilde{X}_0 + \int^t_0 \Big(b(\tilde{X}_s,s) + a(\tilde{X}_s,s)u(\tilde{X}_s,s)\Big)\dif s + \int^t_0 \sigma(\tilde{X}_s,s)\dif \tilde{W}_s,\\
		\tilde{X}_0 &\sim \tilde{\mu}
	\end{split}
\end{align}
for $0 \le t \le T$. We thus find ourselves in the situation described at the end of Section~\ref{sec:Gibbs} involving minimization over both the admissible controls $u$ and the initial condition $\tilde{\mu}$, possibly subject to additional constraints on $\tilde{\mu}$. Let $\tilde{P}$ denote the distribution of $\tilde{X}$ for a given pair $(\tilde{\mu},u)$. Then, specializing the decomposition \eqref{eq:free_energy_decomposition} to the present setting, we can write
\begin{align*}
	F(\tilde{P}) = D(\tilde{\mu} \| \mu) + \langle J(u,\cdot), \tilde{\mu} \rangle,
\end{align*}
which is minimized by the choice of $u = u_*$ and of $\tilde{\mu}$ as any minimizer of the functional
\begin{align*}
	\tilde{\mu} \mapsto D(\tilde{\mu} \| \mu) + \langle J(u_*,\cdot), \tilde{\mu} \rangle
\end{align*}
subject to the given constraint $\tilde{\mu} \in \cC$. We will primarily consider the setting when (R1) holds for every element of $\cC$ for some constant $c > 0$.

\section{Consequences}
\label{sec:consequences}

In this section, we will examine several problems pertaining to diffusion processes through the control-theoretic lens of Theorem~\ref{thm:main}. 

\subsection{Feynman--Kac averages} 

Due to the structure of the Hamiltonian function $H$ in \eqref{eq:Hamiltonian}, the Gibbs measures $P^{*,z}$ in \eqref{eq:Gibbs_density} are of the \textit{Feynman--Kac type} \cite{Lorinczi_Gibbs}. Thus, the problem of generating samples from $P^{*,z}$ is synonymous with the problem of computing (or estimating) Feynman--Kac averages of the form
\begin{align}\label{eq:FK_average}
	\langle F, P^{*,z} \rangle = \frac{1}{\int_\sX \exp(-H)\dif P^z} \int_\sX F \exp(-H) \dif P^z
\end{align}
for bounded measurable functions $F : \sX \to \Reals$ on the path space $\sX = C([0,T];\Reals^n)$, provided we have a mechanism for generating random paths under the reference measure $P^z$. In some special cases, a sampling procedure can be built based on the `killing' interpretation of Feynman--Kac averages \cite{Ito_McKean}: If $f$ is everywhere positive and $g \equiv 0$, then we can think of a particle following a path in the $P^z$-ensemble that gets `killed' at a point $(x,t)$ and in time interval $[t,t+\dif t]$ with probability $f(x,t)\dif t$. Then the average of $F$ over the paths that have `survived' at time $T$ is exactly the Feynman--Kac average \eqref{eq:FK_average}. This procedure amounts to a reweighting of sample paths generated according to the reference measure.

To the best of the author's knowledge, the first explicit construction of an alternative sampling method relying, instead of killing, on the addition of a drift to the reference process was given by Ezawa, Klauder, and Shepp \cite{Ezawa_Klauder_Shepp} in the special case when the reference process is an $n$-dimensional Brownian motion starting at $z$ at $t = 0$ and when $g \equiv 0$ in \eqref{eq:Hamiltonian}. In fact, the drift constructed in \cite{Ezawa_Klauder_Shepp} is exactly the optimal control $u_*$ defined in \eqref{eq:ustar}, where the function $\rho = \exp(-v)$ is given as a solution of a certain (linear) PDE. Following the ideas of Fleming \cite{Fleming}, we can view this linear PDE as related to Hamilton--Jacobi--Bellman PDE \eqref{eq:HJB} for the value function $v$ via the logarithmic (or Cole--Hopf) transformation $v = -\log \rho$. No such control-theoretic interpretation was given in the original paper \cite{Ezawa_Klauder_Shepp}, although it was pointed out in later works by other authors \cite{Yasue,Guerra_Morato}.

\subsection{Reciprocal Markov processes and the Schr\"odinger bridge} Let two Borel probability measures $\mu,\mu'$ on $\Reals^n$ be given. Consider the controlled process \eqref{eq:random_init_cond} with $\tilde{X}_0 \sim \mu$. We wish to find an admissible drift $u \in \sU$ such that the `energy'
$$
\frac{1}{2}\tilde{\E}\int^T_0 \frac{1}{2}|\sigma^\trn u(\tilde{X}_t,t)|^2 \dif t
$$
is minimized subject to the constraint $\tilde{X}_T \sim \mu'$. This problem, going back to the work of Sch\"rodinger and Bernstein on so-called reciprocal Markov processes, is now commonly referred to as the \textit{Schr\"odinger bridge} problem \cite{Follmer_1988}. A control-theoretic treatment was given by Dai Pra \cite{DaiPra}. Here, we revisit it from the free energy minimization perspective and, in particular, explicitly identify the corresponding Gibbs measure on the path space.

We assume that, in addition to (R1) and (R2), the uniform ellipticity condition \eqref{eq:uniform_ellipticity} holds, so that the reference process \eqref{eq:ref_process} is nondegenerate, as in \cite{DaiPra}. This ensures the existence of everywhere positive transition densities $p(z,y;s,t)$ for $0 \le s \le t \le T$ and $z,y \in \Reals^n$, so that
\begin{align*}
	\E h(X^{z,s}_t) = \int_{\Reals^n} h(y)p(z,y;s,t) \dif y
\end{align*}
for all bounded, measurable $h : \Reals^n \to \Reals$. We will denote by $\tilde{\mu}'$ the probability law of $X_T$ under the reference process \eqref{eq:ref_process} when $X_0 \sim \mu$, i.e., 
\begin{align*}
	\tilde{\mu}'(A) = \int_A \left(\int_{\Reals^n} p(z,y;0,T)\mu(\dif z)\right) \dif y
\end{align*}
for any Borel set $A \subseteq \Reals^n$; in particular, $\tilde{\mu}'$ has a density with respect to the Lebesgue measure. We assume henceforth that $\mu'$ is absolutely continuous w.r.t.\ $\tilde{\mu}'$.

Following \cite{DaiPra}, we will make essential use of the following key structural result of Beurling \cite{Beurling} and Jamison \cite{Jamison}: Given $\mu$, $\mu'$, and $p$, there exist two unique $\sigma$-finite Borel measures $\nu$ and $\nu'$ on $\Reals^n$, such that the measure
\begin{align}\label{eq:S_coupling_1}
	\pi(E) \deq \int_E p(z,y;0,T)\nu(\dif z)\nu'(\dif y), \qquad E \in \cB(\Reals^n \times \Reals^n)
\end{align}
has marginals $\mu$ and $\mu'$, i.e.,
\begin{align}\label{eq:S_coupling_2}
	\pi(\cdot \times \Reals^n) = \mu(\cdot), \qquad \pi(\Reals^n \times \cdot) = \mu'(\cdot);
\end{align}
moreover, $\mu \sim \nu$ and $\mu' \sim \nu'$, where $\sim$ indicates equivalence (mutual absolute continuity) of measures. Since $\tilde{\mu}'$ has a density w.r.t.\ the Lebesgue measure, so does $\nu'$. Denoting the latter density by $q$, let us define
\begin{align*}
	\rho(z,t) \deq \E[q(X^{z,t}_T)] = \int_{\Reals^n} p(z,y;t,T) q(y)\dif y, \qquad \text{for all } (z,t) \in \Reals^n \times [0,T].
\end{align*}
Then $\rho(z,T) = q(z)$, and it follows from \eqref{eq:S_coupling_1} and \eqref{eq:S_coupling_2} that
\begin{align*}
	\rho(z,0) =  \int_{\Reals^n} p(z,y;0,T)q(y) \dif y =  \int_{\Reals^n} p(z,y;0,T) \nu'(\dif y) = \frac{\dif\mu}{\dif\nu}(z).
\end{align*}

For a nonrandom initial condition $\tilde{X}_0 = z$, consider the Gibbs measure $P^{*,z}$ with
\begin{align*}
	\frac{\dif P^{*,z}}{\dif P^z} = \frac{q(X^z_T)}{\rho(z,0)} = \frac{\exp(-H(X^z))}{\E_{P^z} \exp(-H(X^z))}
\end{align*}
corresponding to the Hamiltonian $H(X^z) = -\log q(X^z_T)$, i.e., we take $f \equiv 0$ and $g = -\log q$ in \eqref{eq:Hamiltonian}. Assuming $g$ is such that (H1) and (H2) are satisfied, Theorem~\ref{thm:main} tells us that we can obtain samples from $P^{*,z}$ using the admissible control
\begin{align*}
	u_*(x,t) = -\left(\frac{\partial v}{\partial x}(x,t)\right)^\trn
\end{align*}
with $v(x,t) = - \log \rho(x,t)$; the same control also yields the optimal solution for the random initial condition $\mu$, and in that case the law of the corresponding controlled process $(\tilde{X}^*_t ; 0 \le t \le T)$ is given by the Gibbs mixture
\begin{align}\label{eq:Gibbs_mixture_SB}
	P^{*,\mu} = \int_{\Reals^n} \mu(\dif z)P^{*,z}
\end{align}
(cf.~the discussion following the proof of Theorem~\ref{thm:main}). It is readily verified that $\tilde{X}^*_T$ has the prescribed law $\mu'$: For any bounded and measurable $h : \Reals^n \to \Reals$, we have
\begin{align*}
	\E h(\tilde{X}^*_T) &= \int_{\Reals^n \times \Reals^n} h(y) p(z,y;0,T) \frac{q(y)}{\rho(z,0)}\mu(\dif z)   \dif y \\
	&= \int_{\Reals^n \times \Reals^n} h(y) p(z,y;0,T) \nu(\dif z) \nu'(\dif y) \\
	&= \int_{\Reals^n} h(y) \mu'(\dif y).
\end{align*}
The corresponding free energies (or minimum expected costs) can be computed as follows. First, for the nonrandom initial condition $\tilde{X}_0 = z$, we have
\begin{align*}
	F(z,P^{*,z}) &= \frac{1}{2}\E \int^T_0 \big|\sigma^\trn u_*(\tilde{X}^{*,z}_t,t)\big|^2 \dif t - \E[\log q(\tilde{X}^{*,z}_T)] \\
		&= - \log \rho(z,0) \\
		&= - \log \frac{\dif \mu}{\dif \nu}(z);
\end{align*}
then, under \eqref{eq:Gibbs_mixture_SB}, 
\begin{align*}
	F(P^{*,\mu}) &= \int_{\Reals^n} F(z,P^{*,z})\mu(\dif z) \\
	&= - \int_{\Reals^n} \mu(\dif z)\log \frac{\dif\mu}{\dif \nu}(z) \\
	&= - D(\mu \| \nu),
\end{align*} 
where in the last line we have extended the definition \eqref{eq:KL} of the relative entropy $D(\cdot \| \cdot)$ to any pair of $\sigma$-finite Borel measures (recall that $\mu \sim \nu$). Using these, we can recover the following expression for the `minimum control effort' from \cite{DaiPra}:
\begin{align*}
	\frac{1}{2}\E\int^T_0 \big|\sigma^\trn u_*(\tilde{X}^*_t)\big|^2 \dif t &= \E[\log q(X^*_T)]  + F(P^{*,\mu})\\
	&= D(\mu' \| \tilde{\nu}')-D(\mu \| \nu),
\end{align*}
where 
\begin{align*}
	\tilde{\nu}'(A) \deq \int_A \left(\int_{\Reals^n} p(z,y;0,T)\nu(\dif z)\right) \dif y.
\end{align*}

In general, the determination of the Beurling--Jamison measures $\nu$ and $\nu'$ in \eqref{eq:S_coupling_1} and \eqref{eq:S_coupling_2} is not straightforward, and can be done using a forward-backward successive approximation scheme going back to the seminal work of Fortet, cf.~\cite{Essid2019} and references therein. However, an explicit solution can be given when $\mu = \delta_0$ (the Dirac measure at $z=0$) and when the reference process is a standard $n$-dimensional Brownian motion $(W_t; 0 \le t \le T)$ \cite{Follmer_1988}. In that case, $\tilde{\mu}'$ is the Gaussian measure $\gamma_T$ with mean $0$ and covariance matrix $TI_n$, $\nu = \mu = \delta_0$, and $q = \frac{\dif \mu'}{\dif \gamma_T}$. The value function is then given by
\begin{align*}
	v(x,t) = - \log \E[q(x+W_T - W_t)],
\end{align*}
and the optimal control $u_*(x,t) = - \big(\frac{\partial v}{\partial x}(x,t)\big)^\trn$ (the so-called \textit{Föllmer drift}) attains the minimum energy
\begin{align*}
	\frac{1}{2}\E\int^T_0 |u_*(\tilde{X}^*_t)|^2 \dif t = D(\mu' \| \gamma_T).
\end{align*}

\subsection{Time reversal of diffusions} A problem closely related to the Schr\"odinger bridge is the following \cite{Anderson_1982,Haussmann_Pardoux}: Consider an $n$-dimensional diffusion process
\begin{align}\label{eq:TR_forward}
	X_t = X_0 + \int^t_0 b(X_s,s)\dif s + \int^t_0 \sigma(X_s,s)\dif W_s, \qquad 0 \le t \le T
\end{align}
(where, as before, $W_t$ is an $m$-dimensional standard Brownian motion) and define its time reversal $\bar{X}_t := X_{T-t}$ for $0 \le t \le T$. The question is to determine whether $\bar{X}_t$ is itself a diffusion process, so that it can be expressed as
\begin{align}\label{eq:TR_reversal}
	\bar{X}_t = \bar{X}_0 + \int^t_0 \bar{b}(\bar{X}_s,s)\dif s + \int^t_0 \bar{\sigma}(\bar{X}_s,s)\dif\bar{W}_s
\end{align}
for some $\bar{b}$, $\bar{\sigma}$ and for some $m$-dimensional Brownian motion $\bar{W}$. In \cite{Haussmann_Pardoux}, Haussmann and Pardoux gave a solution of this problem under the following assumptions:
\begin{itemize}
	\item[(S1)] there exists a constant $K$, such that $b$ and $\sigma$ satisfy
	\begin{align*}
	&	|b(x,t)-b(\bar{x},t)| + |\sigma(x,t)-\sigma(\bar{x},t)| \le K|x-\bar{x}|, \\
	& |b(x,t)|  + |\sigma(x,t)| \le K(1+|x|)
	\end{align*}
	for all $x,\bar{x} \in \Reals^n$ and all $0 \le t \le T$;
	\item[(S2)] for almost all $t > 0$, $X_t$ has a density $p(x,t)$ satisfying a local integrability condition: For all $0 < t_0 < T$,
	\begin{align*}
		\int^T_{t_0}\int_A \left(|p(x,t)|^2 + \left|\partial_x p(x,t)\sigma(x,t)^\trn\right|^2\right) \dif x \dif t < \infty
	\end{align*}
	for all bounded open sets $A \subset \Reals^n$, where
	\begin{align*}
	\partial_x p(x,t) = (\partial_{x_1}p(x,t),\dots,\partial_{x_n}p(x,t))
	\end{align*}
	denotes the weak (distributional) derivative of $p(x,t)$ w.r.t.\ $x$.
\end{itemize}
Here, (S1) ensures that \eqref{eq:TR_forward} has a unique strong solution, while (S2) guarantees the existence of weak solutions of the forward and backward Kolmogorov equations for \eqref{eq:TR_forward}. Then there exists an $m$-dimensional Brownian motion process $\bar{W}$, such that \eqref{eq:TR_reversal} holds with
\begin{align}\label{eq:TR_b_sigma}
	\begin{split}
	&\bar{b}_i(x,t) = - b_i(x,T-t) + p(x,T-t)^{-1}\sum^n_{j=1} \frac{\partial}{\partial x_j} \big(a_{ij}(x,T-t)p(x,T-t)\big), \\
	&\bar{\sigma}_{ik}(x,t) = \sigma_{ik}(x,T-t), \qquad \bar{a}_{ij}(x,t) = a_{ij}(x,T-t)
	\end{split}
\end{align}
for $i,j = 1,\dots,n$ and $k = 1,\dots,m$. As in \cite{Haussmann_Pardoux}, we adopt the convention that the term involving $p(x,T-t)^{-1}$ is set to zero if $p(x,T-t) = 0$. We now derive the result of \cite{Haussmann_Pardoux} as a consequence of Theorem~\ref{thm:main}.

To that end, we first rewrite the drift $\bar{b}$ in \eqref{eq:TR_b_sigma} as 
\begin{align}\label{eq:TR_drift}
	\bar{b}(x,t) &= \hat{b}(x,t) + \bar{a}(x,t) \left(\frac{\partial}{\partial x}\log \bar{p}(x,t)\right)^\trn,
\end{align}
where
\begin{align}\label{eq:b_hat}
	\hat{b}_i(x,t) \deq - b_i(x,T-t) + \sum^n_{j=1} \frac{\partial}{\partial x_j}\bar{a}_{ij}(x,t), \qquad i = 1,\dots,n
\end{align}
and $\bar{p}(x,t) := p(x,T-t)$. Let $P$ denote the probability law of the process
\begin{align}\label{eq:TR_ref}
	\hat{X}_t = \hat{X}_0 + \int^t_0 \hat{b}(\hat{X}_s,s)\dif s + \int^t_0 \bar{\sigma}(\hat{X}_s,s) \dif \bar{W}_s, \qquad 0 \le t \le T
\end{align}
with $\hat{X}_0$ having density $\bar{p}(\cdot,0) \equiv p(\cdot,T)$. This will be our reference process. The processes $\{\hat{X}^{z,s}_t : s \le t \le T\}$ for $z \in \Reals^n$ and $0 \le s \le T$ are defined in the same way as before, and $P^z$ will denote the probability law of $\hat{X}^{z,0}$. We will next show that the second term on the right-hand side of \eqref{eq:TR_drift} arises as an optimal control for an appropriately defined Hamiltonian,  the controlled process now taking the form
\begin{align}\label{eq:TR_controlled}
	\tilde{X}_t = \tilde{X}_0 + \int^t_0 \Big(\hat{b}(\tilde{X}_s,s) + \bar{a}(\tilde{X}_s,s)u(\tilde{X}_s,s)\Big)\dif s + \int^t_0 \bar{\sigma}(\tilde{X}_s,s)\dif\tilde{W}_s
\end{align}
with random initialization $\tilde{X}_0$ having density $\bar{p}(\cdot,0)$.

The following technical conditions will suffice for our purposes:
\begin{itemize}
	\item[(T1)]  $X_0$ in \eqref{eq:TR_forward} has a positive density $p(\cdot,0)$ such that $x\mapsto-\log p(x,0)$ is Lipschitz-continuous and such that the inequality
	\begin{align}\label{eq:finite_moments}
		\int_{\Reals^n} \exp(c|z|^2)p(z,0)\dif z < \infty
	\end{align}
	holds with some $c > 0$;
	\item[(T2)] $b(x,t)$ in \eqref{eq:TR_forward} is three times continuously differentiable in $x$ for all $0 \le t \le T$, with bounded first, second, and third derivatives, uniformly in $t$;
	\item[(T3)] $\sigma(x,t)$ in \eqref{eq:TR_forward} is bounded, satisfies the uniform ellipticity condition \cref{eq:uniform_ellipticity}, and is four times continuously differentiable in $x$ for all $0 \le t \le T$, with bounded first, second, third, and fourth derivatives, uniformly in $t$;
\end{itemize}
Under these conditions, the density $p(0,T)$ also satisfies \cref{eq:finite_moments} \cite[Thm.~3.1]{Haussmann_Pardoux}, (R2) holds for $\hat{b}$ and $\bar{\sigma}$, and (H1)--(H2) hold for the Hamiltonian
\begin{align*}
	H(\hat{X}) = \int^T_0 f(\hat{X}_t,t)\dif t + g(\hat{X}_T)
\end{align*}
with
\begin{align}\label{eq:TR_f_g}
	f(x,t) \deq \sum^n_{i=1} \frac{\partial}{\partial x_i}\hat{b}_i(x,t), \qquad g(x) \deq - \log p(x,0).
\end{align}
Moreover, the density $p(x,t)$ of $X_t$ in \eqref{eq:TR_forward} is a classical solution of the forward Kolmogorov equation 
	\begin{align*}
		\frac{\partial}{\partial t}p(x,t) = - \sum^n_{i=1} \frac{\partial}{\partial x_i}\big(b_i(x,t)p(x,t)\big) + \frac{1}{2}\sum^n_{i,j=1} \frac{\partial^2}{\partial x_i \partial x_j}\big(a_{ij}(x,t)p(x,t)\big)
	\end{align*}
for $(x,t) \in \Reals^n \times [0,T]$ (see, e.g., \cite{Stroock2008PDEs}). It is then readily verified that the time-reversed density $\bar{p}(x,t)$ is a solution of the Cauchy problem
\begin{align*}
	\frac{\partial}{\partial t}\bar{p}(x,t) + \hat{\cL} \bar{p}(x,t) - f(x,t)\bar{p}(x,t) = 0 \text{ on } \Reals^n \times [0,T], \qquad \bar{p}(x,T) = p(x,0)
\end{align*}
where
\begin{align*}
	\hat{\cL} = \sum_i \hat{b}_i(\cdot,t) \frac{\partial}{\partial x_i} + \frac{1}{2}\sum_{i,j}\bar{a}_{ij}(\cdot,t) \frac{\partial^2}{\partial x_i \partial x_j}
\end{align*}
is the generator of the reference process \eqref{eq:TR_ref}. Since $f$ and $p(\cdot,0)$ are bounded and by virtue of the assumptions on $b$ and $\sigma$, the Feynman--Kac formula gives the following expression for $v(x,t) := -\log \bar{p}(x,t)$:
\begin{align*}
	v(x,t) &= -\log \E\exp(-H(t,x,\hat{X}^{x,t})),
\end{align*}
where, analogously to \eqref{eq:partial_H}, we have defined
\begin{align*}
	H(t,x,\hat{X}^{x,t}) := \int^T_t f(\hat{X}^{x,t}_s,s)\dif s + g(\hat{X}^{x,t}_T)
\end{align*}
with $f$ and $g$ given in \eqref{eq:TR_f_g}. Theorem~\ref{thm:main} then says that 
\begin{align*}
u_*(x,t) = - \left(\frac{\partial v}{\partial x}(x,t)\right)^\trn = \left(\frac{\partial}{\partial x}\log \bar{p}(x,t)\right)^\trn
\end{align*}
minimizes the expected cost
\begin{align*}
	J(u,z) = \tilde{\E}\left[ \int^T_0 \Bigg( \frac{1}{2} |\bar{\sigma}^\trn u(\tilde{X}_t,t)|^2 + \sum^n_{i=1}\frac{\partial}{\partial x_i}\hat{b}_i(\tilde{X}_t,t)\Bigg) \dif t   -\log p(\tilde{X}_T,0)\right]
\end{align*}
over all admissible controls for the process \eqref{eq:TR_controlled} with nonrandom initial condition $\tilde{X}_0 = z$, and also attains the Gibbs measure $\dif P^{*,z} \propto \exp(-H(\hat{X}^{z,0}))\dif P^z$. As a consequence, we obtain the following variational representation of  $-\log p(\cdot,t)$:
\begin{align*}
	&-\log p(z,t) \\
	&= \min_u \tilde{\E}\Bigg[ \int^T_{T-t} \Bigg( \frac{1}{2} |\bar{\sigma}^\trn u(\tilde{X}_s,s)|^2 + \sum^n_{i=1}\frac{\partial}{\partial x_i}\hat{b}_i(\tilde{X}_s,s)\Bigg) \dif s    - \log p(\tilde{X}_T, 0)\Bigg|\tilde{X}_{T-t} = z\Bigg],
\end{align*}
where the minimization is over all admissible controls for \eqref{eq:TR_controlled}. The second term in the integrand is the divergence of the drift vector field $\hat{b}$, which depends both on $b$ and on the derivatives of $a$ w.r.t.\ $x$, cf.~\cref{eq:b_hat}. The special case of $\sigma$ depending only on time was worked out in \cite{Pavon} and, more recently, in \cite{Berner} in the context of probabilistic generative models.

The same control $u_*$ also works for the random initial condition $\tilde{X}_0$ with density $\bar{p}(\cdot,0) \equiv p(\cdot,T)$, and the probability law of the corresponding process $\{\tilde{X}^*_t : 0 \le t \le T\}$ with $\tilde{X}^*_0$ sampled from $p(\cdot,T)$ is the Gibbs mixture
\begin{align*}
	P^{*,p(\cdot,T)} = \int_{\Reals^n} p(z,T) P^{*,z}\dif z.
\end{align*}
This is also the probability law of the process $\{\bar{X}_t : 0 \le t \le T\}$, the time reversal of $X_t$ in \eqref{eq:TR_forward}. The minimum value of the free energy in this case is given by
\begin{align*}
	F(P^{*,p(\cdot,T)}) &= \int_{\Reals^n} p(z,T) F(z,P^{*,z}) \dif z \\
	&= -\int_{\Reals^n} p(z,T)\log p(z,T)\dif z,
\end{align*}
the differential entropy of the density $p(\cdot,T)$ \cite[Sec.~1.3]{Ihara}, which is finite because $p(\cdot,T)$ has finite second moments.

\section{Conclusions}
\label{sec:conclusion}

We have revisited the work of Mitter and Newton \cite{Mitter_Newton} which used an information-theoretic interpretation of the Bayes' formula to develop an optimal control approach to sampling from conditional densities of diffusion processes. While variational formulations of the Bayes' formula as free energy minimization can be found in other works (see, e.g., Zellner \cite{Zellner} and Walker \cite{Walker}), the stochastic control formulation in \cite{Mitter_Newton} leads to an alternative procedure for conditional sampling not based on iterative methods like Markov chain Monte Carlo. We have shown that free energy minimization provides a natural framework for a number of other problems arising in the context of diffusion processes, and that the stochastic optimal control viewpoint can be used to explain the structure of the solutions to these problems. 

\section*{Acknowledgments}

This work was supported by the NSF under awards CCF-2348624 (``Towards a control framework for neural generative modeling'') and CCF-2106358
(``Analysis and Geometry of Neural Dynamical Systems''), and  by
the Illinois Institute for Data Science and Dynamical Systems (iDS${}^2$), an
NSF HDR TRIPODS institute, under award CCF-1934986.

\bibliographystyle{siamplain}
\bibliography{variational.bib}

\end{document}